\newtheorem{theorem}{Theorem}[section]
\theoremstyle{definition}
\newtheorem{proposition}[theorem]{Proposition}
\newcommand{\LabelQuote}[2]{\vspace{0.5cm}%
     \parbox{12.4cm}{\em #1}\hspace*{0.5cm}(#2)\\[0.5cm]}
\newcommand{\NextEq}{\refstepcounter{equation}\theequation}
\begin{document}
\title{Domination number of Token Graphs
\author{Ruy Fabila-Monroy \thanks{Departamento de Matem\'aticas, CINVESTAV.} 
\footnote{\tt{ruyfabila@math.cinvestav.edu.mx}}  
\and Sergio Gerardo G\'omez-Galicia\footnotemark[1] \footnote{\texttt{sgomez@math.cinvestav.mx}} }}

\maketitle
\begin{abstract}
The $k$-token graph of $G$ is the graph, $F_k(G)$, whose vertices are all the $k$-subsets of $V(G)$; with two of them adjacent whenever their symmetric difference is a pair of adjacent vertices in $G$. In this paper, we study the domination number of the token graphs of the  star, $S_n$, and the 
complete graph, $K_n$.
\end{abstract}

\section{Introduction}

Let $G=(V,E)$ be a simple graph on $n$ vertices, and let $1 \le k \le n-1 $ be an integer. The \emph{$k$-token graph}, $F_k(G)$, is the graph whose vertex set is the set of all $k$-subsets of vertices of $G$, being two of them adjacent if their symmetric difference is a pair of adjacent vertices of $G$. This graph was introduced by Fabila-Monroy, Flores-Peñaloza, Huemer, Hurtado, Urrutia, and Wood \cite{Token}. The authors gave the following interpretation.  Assume that there are $k$ indistinguishable tokens placed on the vertices of $G$, with one token per vertex at most. Construct a new graph with vertex set all the possible token configurations, and make two of them adjacent if one token configuration can be reached from another by sliding a token along an edge of $G$ to an unoccupied vertex. It is easy to see that this graph is isomorphic to $F_k(G)$. 

Token graphs have been defined independently several times and under different names such as \emph{k-graph of a graph},  \emph{double vertex graph}, \emph{k-tuple vertex graph}, \emph{symmetric k-th power graph}
(see~\cite{double_vertex,audenaert,Token,Johns,rudolph,ktuple}). In Figure \ref{fig:F2S5_F3S5}, the star graph $S_5$ is shown together with its $2$ and $3$-token graphs.

\begin{figure}
	\centering	\includegraphics[width=0.6\linewidth]{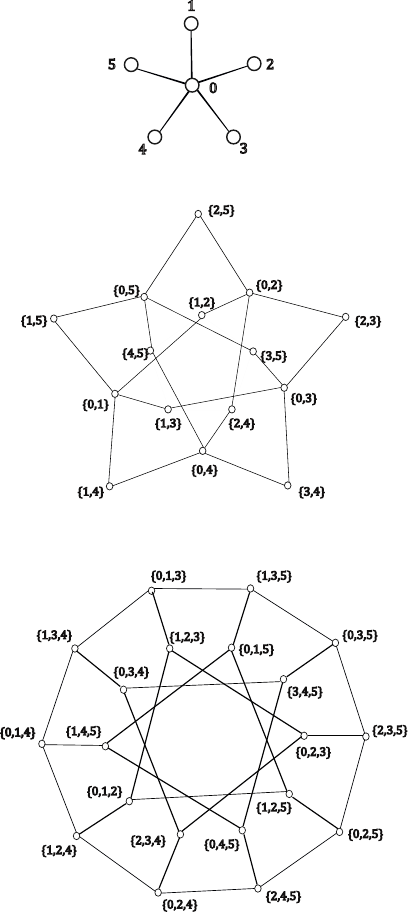}
	\caption{The star graph $S_5$ on the top; and the $2$ and $3$-token graphs of the star graph $S_5$.}   
	\label{fig:F2S5_F3S5}
\end{figure}

A \emph{dominating set} of $G$ is a subset $D\subseteq V(G)$, such that for every vertex of $G$, we have that it is in $D$ or has a neighbor in $D$. The \emph{domination number} of $G$, denoted $\gamma(G)$, is the number of vertices in a dominating set of $G$ of minimum cardinality, i.e. 
\[\gamma(G):=\{|D|:D\text{ is a dominating set of $G$}\}.\]
Let $X\subseteq V(G)$ and $v \in V(G)$. If $v \in D$ or $v$ has a neighbor in $X$, then we say that $X$ \emph{dominates} $v$.

The concept of dominance in graph theory has been of great relevance in different fields, such as network security, social network modeling, and combinatorial optimization (see \cite{mohanty,werner, henning2022graph}). See \cite{dayap2025domination} for a further discussion of applications of domination in graph theory.

The \emph{Johnson graph}, denoted $J(n,k)$, is defined as the graph with vertex set given by the set of all $k$-subsets of an $n$-set, where two such vertices are adjacent whenever they intersect in exactly $k-1$ elements. From this definition, it's straightforward to see that $k$-token graph of the complete graph, $F_k({K_n})$ is isomorphic to the Johnson graph $J(n,k)$.

In this paper, we study the domination number of the token graphs of the star and complete graph. We first give some notation and previous results in the following section.

\section{Preliminaries}\label{section_prel}

 Note that, in general, $F_{k}(G) \cong F_{n-k}(G)$. Therefore, we always assume $1 \leq k \leq \frac{n}{2}$. Let $S$ be a finite set. The set of all $k$-sets of $S$ is denoted by $\binom{S}{k}$. We use the notation $[n]:=\{1,\dots,n\}$; $[m,n]:=[n]-[m-1]$; and $[n]_i:=\{j\in [n]:i\mid j\}$. Let $\Delta(G)$ and $\delta(G)$ be the maximum and minimum degrees of vertices of $G$, respectively. 

 The star graph on $n$ leaves is denoted by $S_{n}$. We assume that its vertex set is $V(S_n):=\{0\} \cup[n]$, with $0$ as the center vertex of the star. We assume that the vertex set of the complete graph is given by $V(K_n)=[n]$. 
 
 The following well-known result gives us a general lower bound for $\gamma(G)$ relating the maximum degree.

\begin{proposition}\label{prop_gen_lwb}
Let $G$ be a graph on $n$ vertices. Then,
    \[
    \frac{n}{1+\Delta(G)}\leq \gamma(G).
    \]
\end{proposition}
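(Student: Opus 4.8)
The plan is to establish the bound by a counting argument on closed neighborhoods. Let $D$ be a dominating set of $G$ of minimum cardinality, so $|D| = \gamma(G)$. For each vertex $v \in D$, its \emph{closed neighborhood} $N[v] := \{v\} \cup \{u : uv \in E(G)\}$ has size at most $1 + \Delta(G)$, since $\deg(v) \le \Delta(G)$.

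Next I would observe that, because $D$ is a dominating set, every vertex of $G$ lies in $N[v]$ for at least one $v \in D$; that is, $V(G) = \bigcup_{v \in D} N[v]$. Taking cardinalities and using subadditivity of the union,
\[
n = |V(G)| = \left| \bigcup_{v \in D} N[v] \right| \le \sum_{v \in D} |N[v]| \le \sum_{v \in D} (1 + \Delta(G)) = |D| \cdot (1 + \Delta(G)) = \gamma(G)\,(1 + \Delta(G)).
\]
Dividing both sides by $1 + \Delta(G)$ (which is positive) yields $\frac{n}{1+\Delta(G)} \le \gamma(G)$, as desired.

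Since this is a routine, well-known estimate, there is essentially no serious obstacle; the only point requiring a moment's care is the verification that $D$ dominating indeed forces $V(G) = \bigcup_{v\in D} N[v]$, which is immediate from the definition of a dominating set (each vertex is either in $D$ or adjacent to some member of $D$). One could alternatively phrase the whole argument as: each element of $D$ ``covers'' at most $1 + \Delta(G)$ vertices, and the whole vertex set must be covered, so at least $n / (1 + \Delta(G))$ elements are needed; I would present the displayed chain of inequalities above as the cleanest formalization.
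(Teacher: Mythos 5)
Your argument is correct and is the standard closed-neighborhood counting proof of this well-known bound; the paper itself omits the proof entirely (stating the proposition as well known), so there is nothing to contrast it with. The chain $n = |\bigcup_{v\in D} N[v]| \le |D|(1+\Delta(G))$ is exactly the right formalization.
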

\qed

Now we state a theorem that gives an upper bound in terms of the minimum degree. A proof of the theorem can be found in Spencer and Alon \cite{alon2016probabilistic}. 

\begin{theorem}
    Let $G$ be a graph on $n$ vertices, such that $\delta(G)>1$. Then
    \[
    \gamma(G)\leq n\left(\frac{1+\ln{(\delta(G)+1)}}{\delta(G)+1}\right).
    \]
\end{theorem}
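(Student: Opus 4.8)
The plan is to argue by the probabilistic method, as in the reference of Spencer and Alon. Write $\delta := \delta(G)$ and fix the parameter
\[
p := \frac{\ln(\delta+1)}{\delta+1}.
\]
The first step is a sanity check that $p \in [0,1]$, so that $p$ is a legitimate probability: this holds because $\ln x \le x$ for all $x > 0$, and the hypothesis $\delta > 1$ guarantees $p > 0$ and keeps the bound non-trivial.

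Next I would build a dominating set in two stages. Form a random subset $A \subseteq V(G)$ by placing each vertex into $A$ independently with probability $p$. This $A$ is typically not dominating, so I repair it: let $B$ be the set of all vertices that are neither in $A$ nor adjacent to a vertex of $A$. By construction every vertex of $G$ is dominated by $D := A \cup B$, so $D$ is a dominating set and it suffices to bound $\mathbb{E}[|D|] \le \mathbb{E}[|A|] + \mathbb{E}[|B|]$.

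The core of the argument is the routine computation of these two expectations. By linearity, $\mathbb{E}[|A|] = np$. For $B$, a fixed vertex $v$ lies in $B$ precisely when $v$ and each of its $\deg(v) \ge \delta$ neighbours are all excluded from $A$, an event of probability $(1-p)^{1+\deg(v)} \le (1-p)^{1+\delta}$ (the extra $1$ in the exponent accounting for $v$ itself). Summing over $v$ and using $1-p \le e^{-p}$ yields $\mathbb{E}[|B|] \le n(1-p)^{1+\delta} \le n e^{-p(1+\delta)} = n e^{-\ln(\delta+1)} = \tfrac{n}{\delta+1}$. Adding the two estimates,
\[
\mathbb{E}[|D|] \;\le\; np + \frac{n}{\delta+1} \;=\; n\left(\frac{1+\ln(\delta+1)}{\delta+1}\right).
\]
Since some outcome of the random experiment attains at most the expectation, there is a choice of $A$ for which the resulting $D$ satisfies $|D| \le \mathbb{E}[|D|]$, and this $D$ witnesses the claimed bound on $\gamma(G)$.

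I do not anticipate a real obstacle: the only points needing care are getting the exponent $1+\deg(v)$ right and explaining the choice of $p$. The latter is not arbitrary — $p = \tfrac{\ln(\delta+1)}{\delta+1}$ is exactly the minimizer of the clean upper bound $g(p) = np + n e^{-p(1+\delta)}$ (solve $g'(p) = 0$), which is why the two contributions combine into the stated closed form.
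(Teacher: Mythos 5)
Your proof is correct and is precisely the standard probabilistic argument from the Alon--Spencer reference that the paper cites in lieu of giving its own proof: choose each vertex independently with probability $p=\ln(\delta+1)/(\delta+1)$, add back the undominated vertices, and bound the expected size. All the computations (the exponent $1+\deg(v)$, the bound $1-p\le e^{-p}$, and the optimization justifying the choice of $p$) check out.
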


An \emph{independent set} of $G$ is a subset of $I\subset V(G)$, non two of which are adjacent. The \emph{independence number} of $G$, denoted $\alpha(G)$, is the number of vertices in a maximum independent set. The following well-known result gives us an upper bound of $\gamma(G)$ relating the independence number.

\begin{proposition}\label{prop_upbound_indep}
    \[
    \gamma(G)\leq \alpha(G).
    \]
\end{proposition}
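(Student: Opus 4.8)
The plan is to exploit the fact that a \emph{maximum} independent set is, in particular, \emph{maximal} with respect to inclusion, and that any maximal independent set is automatically a dominating set. So the proof is really a two-line observation, and the only thing that needs a moment's care is the (elementary) passage from ``maximum'' to ``maximal''.

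Concretely, I would first fix a maximum independent set $I \subseteq V(G)$, so that $|I| = \alpha(G)$. Next I would argue that $I$ dominates every vertex of $G$: if $v \in I$ there is nothing to check, and if $v \notin I$, then $I \cup \{v\}$ is strictly larger than $I$, hence not independent (by maximality of $|I|$); since $I$ itself is independent, the only possible adjacency inside $I \cup \{v\}$ must involve $v$, so $v$ has a neighbor in $I$. Thus $I$ is a dominating set of $G$.

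Finally, from the definition of the domination number as the minimum size of a dominating set, the existence of the dominating set $I$ gives $\gamma(G) \le |I| = \alpha(G)$, which is the claimed inequality.

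I do not expect any real obstacle here; the statement is classical. The one point worth stating cleanly is why no vertex outside $I$ can fail to have a neighbor in $I$ — i.e. that $I$ is maximal — since this is the crux of why independence upgrades to domination.
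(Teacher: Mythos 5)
Your argument is correct and is the standard one: a maximum independent set is maximal, a maximal independent set is dominating, hence $\gamma(G) \le \alpha(G)$. The paper states this proposition as well known and gives no proof at all, so there is nothing to compare against; your write-up simply supplies the classical justification the authors omit, and it does so without gaps.
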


The independence number of token graphs was studied by Abdelmalek, Meulen, E. Kevin, V Meulen and Van Tuyl in \cite{wellcovered}. The authors give the following result.

\begin{theorem}[\cite{wellcovered}]\label{teo_indep}
\[
\binom{\alpha(G)}{k}\leq \alpha(F_k(G))\leq \frac{1}{k}\binom{n}{k-1}\alpha(G).
\]
\end{theorem}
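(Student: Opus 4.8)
The plan is to prove the two inequalities separately, each by a short combinatorial argument.

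First I would handle the lower bound. Let $I$ be a maximum independent set of $G$, so $|I|=\alpha(G)$. I claim that $\binom{I}{k}$, the family of all $k$-subsets of $I$, is an independent set of $F_k(G)$: if $A,B\in\binom{I}{k}$ were adjacent in $F_k(G)$, then $A\triangle B=\{u,v\}$ for some edge $uv$ of $G$, but $A\triangle B\subseteq I$, contradicting that $I$ is independent. Since $\bigl|\binom{I}{k}\bigr|=\binom{\alpha(G)}{k}$ (read as $0$ when $\alpha(G)<k$), this yields $\binom{\alpha(G)}{k}\le\alpha(F_k(G))$.

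Next I would attack the upper bound, which is the more substantial half. Let $\mathcal{I}$ be a maximum independent set of $F_k(G)$. For each $(k-1)$-subset $S\in\binom{V(G)}{k-1}$, define
\[
L(S):=\{\,v\in V(G)\setminus S:\ S\cup\{v\}\in\mathcal{I}\,\}.
\]
The key point is that $L(S)$ is an independent set of $G$: if $u,v\in L(S)$ with $uv\in E(G)$, then $S\cup\{u\}$ and $S\cup\{v\}$ both lie in $\mathcal{I}$ and their symmetric difference is the edge $\{u,v\}$, making them adjacent in $F_k(G)$ --- a contradiction. Hence $|L(S)|\le\alpha(G)$ for every $S$. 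Now I would double count the pairs $(S,A)$ with $A\in\mathcal{I}$, $S\subseteq A$, and $|S|=k-1$. Counting by the first coordinate gives $\sum_{S}|L(S)|\le\binom{n}{k-1}\alpha(G)$; counting by the second coordinate gives exactly $k|\mathcal{I}|$, since each $k$-set $A$ contains $\binom{k}{k-1}=k$ subsets of size $k-1$. Combining the two counts, $k\,\alpha(F_k(G))=k|\mathcal{I}|\le\binom{n}{k-1}\alpha(G)$, which is the desired inequality.

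I do not expect a genuine obstacle here; the only subtlety is choosing the right auxiliary object --- the ``link'' $L(S)$ obtained by fixing $k-1$ of the $k$ tokens --- and observing that this fixing turns adjacency in $F_k(G)$ into adjacency in $G$. That single observation simultaneously forces each $L(S)$ to be independent in $G$ and makes the double count close, so the remaining work is purely bookkeeping.
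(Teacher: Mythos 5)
Your proof is correct. Note that the paper does not prove this statement at all --- it is imported verbatim from the cited reference \cite{wellcovered} --- so there is no in-paper argument to compare against. Both halves of your argument are sound and complete: the lower bound via $\binom{I}{k}$ being independent in $F_k(G)$ is immediate, and the upper bound via the link sets $L(S)$ and the double count of pairs $(S,A)$ with $S\subseteq A$, $|S|=k-1$, $A\in\mathcal{I}$ closes exactly as you say, since each $k$-set contributes $k$ such pairs and each $(k-1)$-set contributes at most $\alpha(G)$. This is the natural (and essentially canonical) proof of the result.
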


We say that a $k$-set $K \subset [n]$, \emph{covers} an $l$-set of $L \subset [n]$, if $L\subset K$. 
Let $2\leq l<k<n$ be integers. An $(n,k,l)$ \emph{covering design} is a family $\mathcal{F}$ of $k$-sets of $[n]$ that
cover every $l$-set of $[n]$. The \emph{covering number}, denoted $C(n,k,l)$, denotes the minimum size of the family $\mathcal{F}$. Since every $k$-set covers $\binom{k}{l}$ $l$-sets, we have the trivial bound
\[C(n,k,l) \ge \frac{\binom{n}{l}}{\binom{k}{l}}.\]
The following theorem, due to Rödl \cite{rodl1985packing} gives an upper bound asymptotically close to this lower bound. 
\begin{theorem}[\cite{rodl1985packing}]\label{theo_covers}
    Let $2\leq l<k<n$ be fixed integers. Then,
    \[
    C(n,k,l)\leq (1+o(1))\frac{\binom{n}{l}}{\binom{k}{l}}.
    \]
\end{theorem}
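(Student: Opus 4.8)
The plan is to deduce the statement from the packing (near-perfect matching) form of the semi-random method and then repair a lower-order remainder. Encode the problem in the hypergraph $\mathcal H=\mathcal H(n,k,l)$ whose vertex set is $\binom{[n]}{l}$ and which has one edge $e_K:=\binom{K}{l}$ for each $K\in\binom{[n]}{k}$. An $(n,k,l)$ covering design is precisely a family $\mathcal F\subseteq\binom{[n]}{k}$ with $\bigcup_{K\in\mathcal F}e_K=\binom{[n]}{l}$, so $C(n,k,l)=\tau(\mathcal H)$, the least number of edges of $\mathcal H$ whose union is the whole vertex set. Since $k$ and $l$ are fixed, $\mathcal H$ has $N:=\binom{n}{l}$ vertices, is $\binom{k}{l}$-uniform, and is $D$-regular with $D:=\binom{n-l}{k-l}$; moreover its pair-degrees are small, since two distinct $l$-sets $L_1,L_2$ lie in a common edge $e_K$ only if $L_1\cup L_2\subseteq K$ and $|L_1\cup L_2|\ge l+1$, whence at most $\binom{n-l-1}{k-l-1}=\tfrac{k-l}{n-l}\,D=o(D)$ edges contain both, as $n\to\infty$.

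The main step is to produce a near-perfect matching in $\mathcal H$: a partial Steiner system $\mathcal M\subseteq\binom{[n]}{k}$ (a family of $k$-sets no two of which share an $l$-subset) covering all but $o(N)$ of the $l$-sets, and hence, since the edges of a matching are disjoint and each has exactly $\binom{k}{l}$ vertices, of size $|\mathcal M|=(1-o(1))N/\binom{k}{l}$. This is the content of the R\"odl nibble applied to the quasirandom hypergraph $\mathcal H$: one builds $\mathcal M$ over many rounds, in each round independently including each surviving edge with a suitably small probability, retaining those included edges that meet no other included edge, adding them to $\mathcal M$, and deleting their vertices. Chernoff- and Azuma-type concentration then show that with high probability the surviving vertex set shrinks by a predictable constant factor each round while $\mathcal H$ stays approximately regular with pair-degrees still $o(D)$; summing the resulting geometric series bounds $|\mathcal M|$ as above, once the number of rounds is allowed to grow and the per-round probability parameter is sent to $0$ slowly with $n$.

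Given such an $\mathcal M$, finish as follows. It leaves $N-\binom{k}{l}|\mathcal M|=o(N)$ of the $l$-sets uncovered; cover each such $l$-set $L$ by one arbitrary $k$-set $K\supseteq L$ (which exists because $l<k<n$), and let $\mathcal F$ consist of $\mathcal M$ together with these patches. Then $\mathcal F$ is a covering design with $|\mathcal F|\le|\mathcal M|+o(N)=(1-o(1))N/\binom{k}{l}+o(N)$. Since $\binom{k}{l}$ is a fixed constant, $o(N)=o\!\left(N/\binom{k}{l}\right)$, so $C(n,k,l)\le|\mathcal F|=(1+o(1))\binom{n}{l}/\binom{k}{l}$, which is the assertion; together with the trivial lower bound recalled above this even pins down the asymptotics.

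I expect the entire difficulty to sit inside the nibble, and specifically in the round-to-round bookkeeping that keeps $\mathcal H$ quasirandom: after deleting the vertices used in a round one must verify that the residual hypergraph is still close to regular and still has pair-degrees $o(D)$ — this is exactly where the codegree estimate from the first paragraph is used — and one must control how the concentration errors accumulate over the growing number of rounds and fix the order of the limits $n\to\infty$, rounds $\to\infty$, probability $\to0$. This is the technical core of \cite{rodl1985packing}; the reduction to the packing version and the patching argument are the only parts one would spell out in the paper.
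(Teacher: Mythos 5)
The paper does not prove this statement at all --- it is imported verbatim from R\"odl's paper --- and your outline reproduces exactly the argument of that reference: encode the problem as a cover of the $\binom{k}{l}$-uniform, $D$-regular hypergraph on $\binom{[n]}{l}$ with $D=\binom{n-l}{k-l}$ and codegrees $O(D\cdot\frac{k-l}{n-l})=o(D)$, extract a near-perfect matching by the nibble, and greedily patch the $o\big(\binom{n}{l}\big)$ uncovered $l$-sets. Your reduction and patching steps are complete and correct; the nibble itself you only describe rather than execute, but that deferred technical core is precisely the content of \cite{rodl1985packing}, so your proposal matches the cited source's approach rather than offering an alternative.
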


The $(n,3,2)$ covering designs such that every $2$-set of $[n]$ is covered exactly by one $3$-set of $[n]$ are called \emph{Steiner triple systems}.
Note that a Steiner triple system has exactly
\[\frac{1}{3}\binom{n}{2}=\binom{n(n-1)}{6} \]
elements. This implies that a necessary condition for the existence of a Steiner triple system is that $n\equiv 1,3\pmod 6$.
Skolem~\cite{skolem} and Bose~\cite{bose} independently showed that this condition is also sufficient.
\begin{theorem}[\cite{skolem,bose}]\label{teo_steiner}
 \[C(n,3,2) =\frac{1}{3}\binom{n}{2} \]
 if and only if $n\equiv 1,3\pmod 6$.
\end{theorem}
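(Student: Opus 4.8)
The plan is to establish the two implications separately; the reverse implication is the substantive one. For the ``only if'' direction, I would first observe that equality in the trivial bound $C(n,3,2)\ge\binom n2/\binom 32=\tfrac13\binom n2$ holds exactly when there is a family $\mathcal F$ of $\tfrac13\binom n2$ triples of $[n]$ covering every $2$-set \emph{exactly} once, i.e.\ a Steiner triple system. Fixing $x\in[n]$ and double counting the pairs $(y,T)$ with $y\ne x$, $x,y\in T\in\mathcal F$ yields $2r_x=n-1$, where $r_x$ is the number of triples of $\mathcal F$ through $x$; hence $n$ is odd. Since $|\mathcal F|=\tfrac{n(n-1)}6\in\Z$ and $n-1$ is even, we also need $3\mid n(n-1)$, i.e.\ $n\equiv 0,1\pmod 3$. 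The residues $\bmod\,6$ satisfying both conditions are precisely $n\equiv 1,3\pmod 6$.

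For the ``if'' direction I would exhibit, for each $n\equiv1,3\pmod6$, a Steiner triple system on $n$ points; any such system meets $C(n,3,2)=\tfrac13\binom n2$. I would use the classical direct constructions via commutative quasigroups, split by residue. If $n\equiv3\pmod6$, write $n=3m$ with $m$ odd (Bose): then $2$ is invertible in $\Z_m$, and $i\ast j:=2^{-1}(i+j)\bmod m$ is a commutative, idempotent quasigroup operation. Take point set $\Z_m\times\{1,2,3\}$, with blocks the ``vertical'' triples $\{(i,1),(i,2),(i,3)\}$ for $i\in\Z_m$ and the ``mixed'' triples $\{(i,s),(j,s),(i\ast j,s+1)\}$ for $s\in\Z_3$ and each unordered pair $i\ne j$ in $\Z_m$. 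One then checks: a within-column pair $\{(a,s),(b,s)\}$ lies only in the mixed triple with third entry $a\ast b$; a pair $\{(a,s),(a,s')\}$ lies only in the vertical triple through $(a,\cdot)$ (idempotency rules out mixed triples); and a cross-column pair $\{(a,s),(b,s+1)\}$ with $b\ne a$ lies only in the mixed triple with $i=a$ and $j$ the unique solution of $a\ast j=b$. The block count $m+3\binom m2=\tfrac{m(3m-1)}2=\tfrac{n(n-1)}6$ confirms the design.

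If $n\equiv1\pmod6$, write $n=6t+1$ (Skolem): replace $\ast$ by a commutative ``half-idempotent'' quasigroup on $\Z_{2t}$ (with $i\ast i=i$ for $i<t$ and $i\ast i=i-t$ for $t\le i<2t$), adjoin one new point $\infty$, take point set $\Z_{2t}\times\{1,2,3\}\cup\{\infty\}$, and redistribute the blocks so that the pairs left uncovered by the non-idempotent squares are handled by triples through $\infty$. The verification that every pair is covered exactly once runs parallel to the Bose case, with heavier bookkeeping.

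The main obstacle is exactly this verification in the ``if'' direction: confirming that each pair of points occurs in precisely one block. This is where the algebraic properties of $\ast$ — the Latin-square (quasigroup) property, commutativity, and (half-)idempotency — do all the work, so the construction ultimately rests on the existence of commutative quasigroups of every odd, respectively even, order with the required diagonal behavior, which is elementary. An alternative that avoids quasigroups altogether is to verify Steiner triple systems by hand for $n\in\{3,7,9\}$ and then reach every admissible $n$ via the recursive doubling constructions $v\mapsto 2v+1$ and $v\mapsto 2v+7$; this trades the quasigroup bookkeeping for a description of those two constructions together with an induction on $n$.
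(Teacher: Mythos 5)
The paper does not prove this statement at all: it is imported as a classical result with citations to Skolem and Bose, preceded only by the observation that a Steiner triple system has $\tfrac{1}{3}\binom{n}{2}$ blocks (and even there the paper is slightly loose, since integrality of $\tfrac{n(n-1)}{6}$ alone only forces $n\equiv 0,1\pmod 3$; the parity condition $2\mid n-1$ needs the replication-number count). Your proposal reconstructs the standard textbook proof, and it is correct in outline. The ``only if'' direction is complete: equality in the trivial bound forces each pair to be covered exactly once, the double count $2r_x=n-1$ gives $n$ odd, and integrality of the block count gives $3\mid n(n-1)$; together these yield $n\equiv 1,3\pmod 6$ (this is exactly the argument the paper elides). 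For the ``if'' direction, your Bose case is essentially a full proof: the three coverage checks you list (same column, same row, cross column) are the right ones, and each uses precisely the quasigroup, commutativity, and idempotency properties you cite, with the block count $m+3\binom{m}{2}=\tfrac{n(n-1)}{6}$ confirming exactness. The Skolem case is only a sketch --- ``redistribute the blocks so that the pairs left uncovered by the non-idempotent squares are handled by triples through $\infty$'' would, in a final write-up, need to be replaced by the explicit block list (the $t$ vertical triples over the idempotent part, the $3t$ triples through $\infty$ of the form $\{\infty,(t+i,s),(i,s+1)\}$, and the mixed triples) together with the case analysis of which pairs each type covers. That bookkeeping is routine but not optional. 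As a proposal, though, you have identified all the ingredients the cited sources use, and your fallback via the doubling constructions $v\mapsto 2v+1$, $v\mapsto 2v+7$ is a legitimate alternative route to sufficiency.
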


\section{Star graph}\label{section_result}

\begin{theorem}\label{prop_dom_2_star}
\[
\gamma(F_2(S_n))=n-1.
\]
\end{theorem}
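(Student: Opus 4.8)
The plan is to first pin down the structure of $F_2(S_n)$ and then establish matching upper and lower bounds. Checking the adjacency rule directly, $F_2(S_n)$ has two kinds of vertices: the \emph{center pairs} $\{0,i\}$ with $i\in[n]$, and the \emph{leaf pairs} $\{i,j\}$ with $i,j\in[n]$; the edges are exactly $\{0,i\}\sim\{i,j\}$ for distinct $i,j\in[n]$, because the symmetric difference of two $2$-sets equals an edge $\{0,t\}$ of $S_n$ precisely when the two sets share one element $i\in[n]$ and the non-shared elements are $0$ and some $t\in[n]$. In particular every leaf pair $\{i,j\}$ has exactly the two neighbours $\{0,i\}$ and $\{0,j\}$, while every center pair has degree $n-1$ (so $F_2(S_n)$ is just the subdivision graph of $K_n$). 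I will use this picture throughout; note that the general bound of Proposition~\ref{prop_gen_lwb} only gives $\gamma(F_2(S_n))\ge (n+1)/2$, so a tailored argument is needed for the lower bound.

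For the upper bound $\gamma(F_2(S_n))\le n-1$, I would exhibit the set
\[
D:=\{\{0,1\},\{0,2\},\dots,\{0,n-2\}\}\cup\{\{n-1,n\}\},
\]
which has $n-1$ elements, and verify it is dominating: each center pair $\{0,i\}$ with $i\le n-2$ lies in $D$; the remaining center pairs $\{0,n-1\}$ and $\{0,n\}$ are both adjacent to $\{n-1,n\}\in D$; and any leaf pair other than $\{n-1,n\}$ contains some index $i\le n-2$, hence is adjacent to $\{0,i\}\in D$. This is the only genuinely constructive step, and it is routine.

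For the lower bound, let $D$ be an arbitrary dominating set, put $A:=\{i\in[n]:\{0,i\}\in D\}$, $a:=|A|$, and let $D_\ell:=D\setminus\{\{0,i\}:i\in[n]\}$ be the set of leaf pairs in $D$, so that $|D|=a+|D_\ell|$. The key observation is that whenever $i,j\in[n]\setminus A$, the leaf pair $\{i,j\}$ has both of its neighbours $\{0,i\},\{0,j\}$ outside $D$, so $\{i,j\}$ must itself belong to $D$; hence $\binom{[n]\setminus A}{2}\subseteq D_\ell$ and $|D_\ell|\ge\binom{n-a}{2}$. Therefore $|D|\ge a+\binom{n-a}{2}=:f(a)$, and it remains to minimise $f$ over $a\in\{0,1,\dots,n\}$. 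Since $f(a+1)-f(a)=a-(n-2)$, the values $f(a)$ decrease until $a=n-2$ and then increase, with $f(n-2)=f(n-1)=n-1$; thus $|D|\ge n-1$. Combined with the upper bound, this proves $\gamma(F_2(S_n))=n-1$. I do not expect a serious obstacle: the lower bound collapses to a one-variable integer optimisation, and the only things to watch are the boundary values of $f$ and the implicit range of $n$ (so that $F_2(S_n)$ is defined and the displayed $D$ makes sense).
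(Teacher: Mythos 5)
Your proposal is correct and follows essentially the same route as the paper: your dominating set is exactly the paper's $D_{ij}$ with $\{i,j\}=\{n-1,n\}$, and your lower bound rests on the same structural fact that each leaf pair $\{i,j\}$ has only the two neighbours $\{0,i\}$ and $\{0,j\}$. The only difference is presentational: the paper fixes a hypothetical set of size $n-2$ and counts that it dominates at most $x(n-1)-\binom{x}{2}+(n-2-x)<\binom{n}{2}$ leaf pairs, whereas you derive the equivalent forced inclusion $\binom{[n]\setminus A}{2}\subseteq D$ and minimise $a+\binom{n-a}{2}$ directly --- an arguably cleaner packaging of the same count.
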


\begin{proof}

Let \[V_0:=\{A\in V(F_2(S_n)):0\in A\}; \textrm{ and } W:=V(F_2(S_n))\setminus V_0;\]
for every $1 \le i  \le n$, let 
\[V_i:=\{A\in V(F_2(S_n))\setminus V_0:i\in A\};\]
and for every $1 \le i <j  \le n$, let 
\[D_{ij}:= (V_0\setminus\{\{0,i\},\{0,j\}\})\cup \{\{i,j\}\}.\]
 
We show that $D_{ij}$ is a dominating set. Let $A\in V_0$. If $A\notin D_{ij}$, then $A=\{0,i\}$ or $A=\{0,j\}$, and, in any case, $A$ is adjacent to $\{i,j\}\in D_{ij}$. Now let $A\notin V_0$. Thus, $A\in V_t$ for some $t$. If $t\neq i,j$, then $A$ is adjacent to $\{0,t\}\in D_{ij}$. Now, if $t=i$ or $t=j$, then, $A=\{i,j\}\in D_{ij}$ or $|A\cap\{i,j\}|=1$. 
Suppose that  $|A\cap\{i,j\}|=1$ and let $l =A \setminus \{i,j\}$. Thus, $A$ is adjacent to $\{0,l\}\in D_{ij}$, and $D_{ij}$ is a dominating set.
Since 
\[|D_{ij}|=(n-2)+1=n-1,\]
we have that
 \[\gamma(F_2(S_n)) \le n-1.\]

Let $X\subset V(F_2(S_n))$ with $|X|=n-2$ elements. We show that
there is at least one vertex of $W$, that is not in $X$ nor adjacent to a vertex of $X$. 
Let $X_0:=V_0 \cap X$ and $x:=|X_0|$. Let $X_1 =X\setminus X_0$; thus, $|X_1|=n-2-x$.
Every vertex $\{0,i\} \in V_0$ has exactly $n-1$ neighbors in $W$(and no neighbors in $V_0$); these
are of the form $\{i,j\}$ for some $j\neq i$. Moreover, a vertex $\{i,j\} \in W$ has exactly
two neighbors, and they are both in $V_0$; these are $\{0,i\}$ and $\{0,j\}$. Therefore, the number of neighbors of $X_0$ in
$W$ is exactly 
\[x(n-1)-\binom{x}{2}.\]
Thus, the number of vertices of $W$ that are either in $X$ ($X_1$) or adjacent
to a vertex of $X$ ($X_0$) is given by the function
\[f(x)=x(n-1)-\binom{x}{2}+n-2-x.\]
Since \[f'(x)=n-x-\frac{3}{2} \ge \frac{1}{2} >0\]
for all $x \in [0,n-2]$, we have that $f$ attains its maximum at $x=n-2$.
But, 
\[f(n-2)=\frac{n(n-1)}{2}-1<\binom{n}{2}=|W|.\]
Therefore, \[\gamma(F_2(S_n)) \ge n-1.\]
This completes the proof.
\end{proof}
%
%

%

\begin{theorem}\label{thm:S_n}
Let $1 \le k \le n-1$ be fixed, and let $p$ be the smallest prime that divides $k-1$. Then
\[\frac{1}{k}\binom{n}{k-1} < \gamma(F_k(S_n)) \le \left ( 1-\frac{1}{p}+  \frac{1}{k+p-1} +o(1) \right )\binom{n}{k-1}.\]
\end{theorem}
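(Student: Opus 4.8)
First I would understand the token graph of the star. A $k$-subset $A$ of $V(S_n) = \{0\} \cup [n]$ either contains $0$ or does not. If $0 \in A$, write $A = \{0\} \cup B$ with $B \in \binom{[n]}{k-1}$; such a vertex is adjacent exactly to the sets obtained by sliding the token from $0$ to some leaf $j \notin B$, i.e. to $(\{0\}\cup B) \triangle \{0,j\} = B \cup \{j\}$, and also to sets obtained by sliding a token from a leaf $i \in B$ to the center — but that requires $0$ to be free, which it isn't. Wait: actually the only edges of $S_n$ are $\{0,i\}$, so a move always slides a token off of, or onto, the center. Hence a vertex with $0 \in A$ is adjacent only to vertices with $0 \notin A$ and vice versa, so $F_k(S_n)$ is bipartite with parts $\mathcal{V}_0 = \{A : 0 \in A\} \cong \binom{[n]}{k-1}$ and $\mathcal{W} = \{A : 0 \notin A\} = \binom{[n]}{k}$. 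A vertex $\{0\}\cup B \in \mathcal V_0$ is adjacent precisely to those $k$-sets $C \in \binom{[n]}{k}$ with $B \subset C$ (slide the center token to the one leaf of $C \setminus B$), i.e. $\{0\}\cup B$ dominates exactly the $k$-supersets of $B$ together with itself. Conversely $C \in \mathcal W$ is adjacent to the $k$ vertices $\{0\} \cup (C \setminus \{c\})$, $c \in C$. So $F_k(S_n)$ is, up to the pendant identification on $\mathcal V_0$, the bipartite containment graph between $(k-1)$-sets and $k$-sets of $[n]$.

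**The lower bound.** Take a minimum dominating set $D$, and split $D = D_0 \cup D_1$ with $D_0 \subseteq \mathcal V_0$, $D_1 \subseteq \mathcal W$. Every vertex of $\mathcal V_0$ not in $D_0$ must have a neighbor in $D_1$; but a vertex $\{0\}\cup B$ has neighbors only among $k$-supersets of $B$, and each $k$-set $C \in D_1$ covers only the $k$ different $(k-1)$-subsets of $C$ as neighbors. Counting: $|\mathcal V_0| = \binom{n}{k-1}$ vertices must each lie in $D_0$ or be covered, and $|D_1|$ vertices cover at most $k|D_1|$ of them, so $\binom{n}{k-1} \le |D_0| + k|D_1| \le k(|D_0| + |D_1|) = k\gamma$, giving $\gamma \ge \frac1k\binom{n}{k-1}$. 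To get the strict inequality I would argue that equality forces $D_0 = \emptyset$ and $D_1$ to be a perfect "matching-like" family where the $(k-1)$-subsets are partitioned into classes of $k$ each coming from a common $k$-set — essentially a resolvable-type design — and then show the $k$-sets of $D_1$ themselves are not all dominated (a $k$-set $C$ is dominated only by its own $(k-1)$-subsets being in $D_0$, which is empty, or by $C \in D_1$; but then $D_1$ would have to be all of $\mathcal W$, contradiction for $n$ large). A short parity/counting contradiction of this kind upgrades $\ge$ to $>$.

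**The upper bound.** This is where the primes and the $(1 - 1/p)$ term enter, and I expect it to be the main obstacle. The natural construction: take an $(n,k,k-1)$-covering design $\mathcal F \subseteq \binom{[n]}{k}$; putting $\mathcal F$ into the dominating set dominates all of $\mathcal V_0$ and all of $\mathcal F$ itself, leaving only the $k$-sets \emph{not} in $\mathcal F$ to dominate, each needing one of its $(k-1)$-subsets placed in $\mathcal V_0$. Naively that costs another covering design and gives $\approx \frac2k\binom{n}{k-1}$, which is worse than the claimed bound when $p$ is small. The improvement must come from choosing $\mathcal F$ so that it is \emph{not} just a covering design but also \emph{self-dominating from below cheaply} — i.e. arrange matters so that a $p$-fraction structure lets a single family of $(k-1)$-sets simultaneously cover $\mathcal V_0 \setminus (\text{already covered})$ and all of $\mathcal W$. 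Concretely, I would use the smallest prime $p \mid k-1$ to build a partition-based design: partition $[n]$ (approximately) into blocks and use an affine/linear structure modulo $p$ so that the chosen $(k-1)$-sets, when we add any leaf, land in a controlled family; the "$1 - 1/p$" reflects that one residue class out of $p$ can be omitted, and the "$\frac{1}{k+p-1}$" looks like the cost $\frac{\binom{n}{k-1}}{\binom{k+p-1}{k-1}}$-type term of a secondary covering design with blocks of size $k+p-1$. So the plan is: (1) invoke Theorem \ref{theo_covers} to get a near-optimal $(n, k+p-1, k-1)$-covering design $\mathcal G$ of size $(1+o(1))\binom{n}{k-1}/\binom{k+p-1}{k-1}$; (2) inside each block $G \in \mathcal G$ of size $k+p-1$, use the prime $p$ to select a sub-family of $k$-sets and $(k-1)$-sets realizing the local ratio $1 - 1/p$ via a cyclic/difference construction (this is the delicate combinatorial core); (3) check that the union over all blocks dominates every vertex of $F_k(S_n)$; (4) add up the sizes, absorbing error terms into $o(1)\binom{n}{k-1}$. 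The hard part is step (2): proving that a $p$-structured family on $k+p-1$ points both covers all $(k-1)$-sets "from above" with a $1-1/p$ proportion and covers all $k$-sets "from below", and I would look for this in the theory of group-divisible or resolvable designs, or build it directly from the arithmetic of $\Z/p\Z$ using that $p \mid k-1$.
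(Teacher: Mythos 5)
Your lower bound is fine and in fact takes a different (and self-contained) route from the paper: you exploit the bipartition of $F_k(S_n)$ into $\mathcal V_0$ and $\mathcal W$ and count that each $k$-set dominates exactly $k$ vertices of $\mathcal V_0$, whereas the paper simply applies the generic bound $\gamma \ge |V|/(1+\Delta)$ with $\Delta = n-k+1$ and checks that the resulting constant exceeds $1/k$ strictly. Your strictness step is only sketched, but the sketch is correct and completable in one line: equality forces $D_0=\emptyset$, and then no vertex of $\mathcal W$ outside $D_1$ is dominated, so $D_1=\mathcal W$, which is too large.

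The upper bound, however, has a genuine gap: your step (2) — the ``delicate combinatorial core'' — is exactly the content of the theorem, and the block-local route you propose for it is not the right mechanism. The paper's construction is \emph{global}, not block-by-block: it takes $D_1:=\{A\ni 0:\ \sum_{a\in A}a\not\equiv 1\pmod p\}$, which has size $(1-\tfrac1p+o(1))\binom{n}{k-1}$ and already dominates \emph{all} of $\mathcal W$. The reason is a one-line telescoping argument: the neighbors of a $k$-set $A\subset[n]$ with sum $S$ are $\{0\}\cup(A\setminus\{a\})$ with sums $S-a$; if all of these were $\equiv 1\pmod p$ then summing over $a\in A$ gives $(k-1)S\equiv k\pmod p$, i.e.\ $0\equiv 1\pmod p$ since $p\mid k-1$ — a contradiction. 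This is where the hypothesis on $p$ and the coefficient $1-\tfrac1p$ actually enter; it is a Tur\'an-system construction ``from below,'' not a covering design ``from above.'' The covering designs (Theorem~\ref{theo_covers}) are used only for the cheap residual step: each undominated $A\in V_0$ has some residue class mod $p$ containing $j\ge\frac{k-1}{p}$ of its leaf-elements, and one covers these $j$-sets by $(j+1)$-sets within that residue class at cost $\frac{1}{j+1}\le\frac{p}{k+p-1}$ per set, yielding the term $\frac1p\cdot\frac{p}{k+p-1}=\frac{1}{k+p-1}$. In particular your identification of the $\frac{1}{k+p-1}$ term with an $(n,k+p-1,k-1)$-covering design is off: such a design has size about $\binom{n}{k-1}/\binom{k+p-1}{k-1}$, which is far smaller than $\frac{1}{k+p-1}\binom{n}{k-1}$, and no construction of the kind you describe is exhibited or reduced to a known design-theoretic fact. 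Without producing the family $D_1$ (or an equivalent), the upper bound is not proved.
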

\begin{proof} 
Since $\Delta(F_k(S_n))=n-k+1$, by Proposition~\ref{prop_gen_lwb} we have that
\begin{align*}
 \gamma(F_k(S_n)) & \ge \frac{1}{1+(n-k+1)}\cdot \binom{n+1}{k}\\[0.6em]
 &= \frac{1}{n-k+2}\cdot \frac{n+1}{k}\binom{n}{k-1}\\[0.6em]
 &=\frac{n+1}{k(n-k+2)}\binom{n}{k-1}\\[0.6em]
 & > \frac{1}{k} \binom{n}{k-1}.
\end{align*}

We now prove the upper bound. Partition the vertices of $F_k(S_n)$ into the  sets
 \[V_0  := \{A \in V(F_k(S_n)): 0 \in A\} \textrm{ and } \overline{V_0}  := \{A \in V(F_k(S_n)): 0 \notin A\}.\]
 Let 
\[D_1:=\left \{A \in V_0: \sum_{a \in A} a \not \equiv 1 \pmod{p}\right \}.\]

Let $A \in \overline{V_0}$ and $S:=\sum_{a \in A} a$. If there exists $a \in A$ such that
 $S-a \not \equiv  1\pmod{p}$, then  there exists an element of $D_1$ adjacent to $A$.
 Suppose that  $S-a \equiv  1\pmod{p}$ for all $a \in A$.
Summing over all $a \in A$,
 we have that,
 \begin{align*}
0
&\equiv (k-1)S \\[0.6em]
&\equiv kS - \sum_{a\in A} a \\[0.6em]
&\equiv \sum_{a\in A}(S-a) \\[0.6em]
&\equiv \sum_{a\in A} 1 \\[0.6em]
&\equiv k \\[0.6em]
&\equiv 1 \pmod p.
\end{align*}
We arrive at a contradiction; therefore,

\LabelQuote{every vertex of $\overline{V_0}$ is adjacent to a vertex of $D_1$.}{\NextEq}

We now bound the cardinality of $D_1$ using a probabilistic argument. Let $A$ be chosen uniformly at random
from $V_0$. We bound the probability that $A \in D_1$. 
 We can think of $A$ being chosen by the following random process, first we choose $0$ to be
in $A$. For $i=1,\dots,k$, let $A_i$ be the elements of $A$ chosen so far, so that $A_1=\{0\}$ and $A_k=A$. For $i=1,\dots,k-1$, we choose $a_{i+1}$ uniformly at random
from $[n]\setminus A_i$. Let $S:=\sum_{a \in A_{k-1}}A_{k-1}$, note that $A$ is in $D_1$, if and only if, $a_k \not \equiv 1-S \pmod{p}$. There at least
$\lfloor n/p \rfloor-(k-2) \ge n/p-(k-1)$ and at most $\lceil n/p\rceil \le n/p+1$ such elements in $[n] \setminus A_{k-1}$.
Therefore, \[ \frac{1}{p}-\frac{p-1}{p} \cdot\frac{k-1}{n-(k-1)} \le\operatorname{Pr} \left ( A \in D_0\right ) \le \frac{1}{p}+\frac{k+p-1}{n-k+1}. \]
This implies that
\begin{equation}
 |D_1| = \left ( \frac{1}{p} \pm o(1)\right) \binom{n}{k-1} \textrm{ and } |V_0\setminus D_1| = \left ( \frac{1}{p}\pm o(1)\right) \binom{n}{k-1}.
 \end{equation}
The only vertices that remain to be dominated are those in 
\[V_0 \setminus D_1=\left \{A \in V_0: \sum_{a \in A} a \equiv 1 \pmod{p}\right \}.\]

For a given set $X \subset [n]$ and $0 \le i \le p-1$, let 
\[X_i:=\left \{x \in X: x\equiv i \pmod{p} \right \}.\]
For every $A \in V_0$, let $A'=A \setminus \{0\}$, so $A'$ is a $k-1$ set.
Let $A \in V_0 \setminus D_1$. Let $r(A)$ be the largest residue class modulo $p$ of the elements in $A'$, where ties are broken 
by taking the smallest residue. Formally,
\[r(A):=\min \left \{i: \left|  A_i'\right |  \ge \left| A_j' \right |  \textrm{ for all } 0 \le j \le p-1 \right \}.\]
Note that 
\[\left |A_{r(A)}' \right |\ge \frac{k-1}{p}.\]
For every $0 \le i \le p-1$ and $0 \le j \le p-1$, let 
\[V_{ij}':=\left \{A \in V_0\setminus D_1: r(A)=i \textrm{ and } \left | A_{r(A)}'\right |=j \right \}.\]
The $V_{ij}'$ form a partition of $V_0 \setminus D_1$.
Let \[U_{ij}:=\left \{A_{i}':A \in V_{ij} \right \} \textrm{ and } W_{ij}:=\left \{ A'\setminus A_{i}':A \in V_{ij}\right \}.\]
We can write $V_{ij}'$ as a ``product'' of $U_{ij}$ and $W_{ij}$, in the sense that
\[V_{ij}'=\{0\}\cup \{X \cup Y: X \in U_{ij} \textrm{ and } Y \in W_{ij} \}.\]
Therefore,
\[\left|V_{ij}' \right |=\left|U_{ij} \right | \cdot \left|W_{ij} \right |.\]
Note that 
\[U_{ij}=\binom{[n]_i}{j}.\]
By Theorem~\ref{theo_covers}, there exists a subset \[U_{ij}' \subset \binom{[n]_i}{j+1}\]
that covers every element of $U_{ij}$ and such that
\[\left | U_{ij}'\right | \le (1+o(1))\frac{1}{j+1}\left |U_{ij} \right | \le (1+o(1)) \frac{p}{k+p-1}\left |U_{ij} \right |\]
Let \[ D_{ij}'=\{X \cup Y: X \in U_{ij}' \textrm{ and } Y \in W_{ij} \}.\]
We have that

\LabelQuote{every vertex of $V_{ij}'$ is adjacent to a vertex of $D_{ij}'$.}{\NextEq}

\noindent Moreover, 
\[|D_{ij}'|\le (1+o(1)) \frac{p}{k+p-1} \left |U_{ij} \right | \cdot \left |W_{ij} \right |= (1+o(1)) \frac{p}{k+p-1} |V_{ij}'|.\]

Let \[D_2:=\bigcup_{\substack{0 \le i \le p-1 \\[0.3em] 0 \le j \le p-1}} D_{ij}'.\]
Thus, 
\[D=D_1 \cup D_2\]
is a dominating set of $F_k(S_n)$; and
\[|D| \le \left ( 1-\frac{1}{p}+  \frac{1}{k+p-1} \right )\binom{n}{k-1}.\]
\end{proof}

Using the fact that every set of integers whose sum is odd must have an odd number of odd integers,
we can improve the upper bound of Theorem~\ref{thm:S_n} when $k \equiv 1 \pmod{4}$ to
\[\gamma(F_k(S_n)) \le \left (\frac{1}{2}+  \frac{1}{k+3} +o(1) \right )\binom{n}{k-1}.\]

\section{Complete graph}

\begin{proposition}\label{prop_dom_2_complete}
\[
\gamma(F_2(K_n))=\left\lfloor\frac{n}{2}\right\rfloor.
\]
\end{proposition}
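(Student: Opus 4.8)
The plan is to recognize that $F_2(K_n) \cong J(n,2)$, the Johnson graph, whose vertices are the $2$-subsets (edges) of $[n]$, with two adjacent when they share exactly one element. Equivalently, this is the \emph{triangular graph} $T(n)$, the line graph of $K_n$. So a dominating set is a collection $D$ of edges of $K_n$ such that every edge of $K_n$ either lies in $D$ or shares an endpoint with some edge in $D$. I would first establish the upper bound $\gamma \le \lfloor n/2 \rfloor$ by an explicit construction: take a maximal matching (or near-perfect matching) $M = \{\{1,2\},\{3,4\},\dots\}$ of size $\lfloor n/2\rfloor$. Any vertex $\{a,b\}$ of $F_2(K_n)$ has $a$ incident to some matching edge $e_a \in M$ (since $M$ is a perfect matching when $n$ is even, and when $n$ is odd only one vertex of $K_n$, say $n$, is unmatched, but then $b \ne n$ or $a \ne n$, so at least one endpoint is matched); then $\{a,b\}$ and $e_a$ share the vertex $a$, hence are adjacent or equal. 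This shows $\lfloor n/2\rfloor$ edges suffice.

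For the lower bound $\gamma \ge \lfloor n/2 \rfloor$, I would argue that a dominating set $D$, viewed as a set of edges of $K_n$, must \emph{cover almost all vertices} of $K_n$ in the vertex-cover sense. Concretely: suppose $D$ is a dominating set and let $U \subseteq [n]$ be the set of vertices of $K_n$ \emph{not} incident to any edge in $D$. If $|U| \ge 2$, pick $u_1, u_2 \in U$; then the edge $\{u_1,u_2\}$ of $K_n$ is a vertex of $F_2(K_n)$ that is neither in $D$ (its endpoints touch no edge of $D$) nor adjacent to any edge of $D$ (adjacency requires a shared endpoint, but neither $u_1$ nor $u_2$ is an endpoint of any edge in $D$) — contradiction. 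Hence $|U| \le 1$, so the edges of $D$ together are incident to at least $n-1$ vertices of $[n]$. A set of $m$ edges is incident to at most $2m$ vertices, so $2|D| \ge n-1$, giving $|D| \ge \lceil (n-1)/2 \rceil = \lfloor n/2 \rfloor$. Combining the two bounds finishes the proof.

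I do not expect a serious obstacle here; the main point requiring a little care is the parity bookkeeping in both directions — checking that the near-perfect matching still dominates when $n$ is odd (the single unmatched vertex of $K_n$ never appears as \emph{both} coordinates of a token-graph vertex), and verifying that $\lceil (n-1)/2\rceil = \lfloor n/2 \rfloor$ for all $n$. One could alternatively phrase the whole argument through the line-graph / vertex-cover dictionary, but the direct argument above keeps it self-contained.
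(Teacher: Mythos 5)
Your proposal is correct and follows essentially the same approach as the paper: the upper bound uses the same (near-)perfect matching construction, and the lower bound is the same counting argument (the paper phrases it contrapositively — a set of fewer than $\lfloor n/2\rfloor$ edges leaves at least two vertices of $K_n$ uncovered, yielding an undominated $2$-set). The parity bookkeeping you flag, including $\lceil (n-1)/2\rceil=\lfloor n/2\rfloor$, checks out.
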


\begin{proof}
Let $D=\{A_1,\dots,A_{\lfloor n/2\rfloor}\} \subset V(F_2(K_n))$, such that $A_i\cap A_j=\emptyset$ for every $i\neq j$. Let $B\in V(F_2(K_n))$. Assume that $B\notin D$. Thus, there is some $A_i$ such that $|A_i\cap B|=1$. Therefore, $B$ is adjacent to $A_i$. Hence, $D$ is a dominating set.
Now we show that there is no smaller dominating set. 
Let $X\subset V(F_2(K_n))$, with $|X|<\lfloor n/2\rfloor$. Note that \[\left |V(K_n) \setminus \bigcup_{A \in X} A \right | \ge 2.\]
Thus, there exists $B \in V(F_2(K_n)) \setminus X$. We have $A \cap B =\emptyset$ for all $A \in X$.
Therefore, $X$ is not a dominating set of $F_2(K_n)$.  This completes the proof. 
\end{proof} 

\bigskip

\begin{theorem} \ \\
\begin{itemize}
\item If $n$ is even, then 
\[\frac{1}{12}n^{2}-\frac{1}{6}n\leq \gamma(F_3(K_n))\leq (1+o(1))\left(\frac{1}{12}n^{2}-\frac{1}{6}n\right);\]   

\item if $n$ is odd, then 
\[\frac{1}{12}n^{2}-\frac{1}{6}n+\frac{1}{4}\leq \gamma(F_3(K_n))\leq (1+o(1))\left(\frac{1}{12}n^{2}-\frac{1}{6}n+\frac{1}{4}\right);\]   
and

\item if  $n\equiv 2,6\pmod{12}$, then 
\[\gamma(F_3(K_n))=\left(\frac{1}{12}n^{2}-\frac{1}{6}n\right).\]
\end{itemize}
\end{theorem}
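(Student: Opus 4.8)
The strategy is to recast domination in $F_3(K_n)\cong J(n,3)$ as a covering problem on pairs of $[n]$. For a family $D$ of $3$-subsets of $[n]$, call a pair $\{x,y\}$ \emph{covered by $D$} if some member of $D$ contains it, and let $M_D$ denote the graph on vertex set $[n]$ whose edges are the \emph{uncovered} pairs. First I would observe that two distinct $3$-sets $A,B$ are adjacent in $F_3(K_n)$ exactly when $B$ contains a $2$-subset of $A$, and deduce that a $3$-set $A$ is dominated by $D$ if and only if $A\in D$ or some pair of $A$ is covered by $D$; since $A\in D$ forces all pairs of $A$ to be covered, it follows that $D$ is a dominating set of $F_3(K_n)$ if and only if $M_D$ is triangle-free. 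This reformulation is the heart of the proof; everything after it is extremal graph theory together with the covering-design results already quoted.

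For the lower bounds, each member of $D$ contains exactly three pairs, so $3|D|$ is at least the number of covered pairs, i.e. $3|D|\ge\binom n2-|E(M_D)|$. Since $M_D$ is triangle-free, Mantel's theorem (the triangle case of Tur\'an's theorem) gives $|E(M_D)|\le\lfloor n^2/4\rfloor$, hence $3|D|\ge\binom n2-\lfloor n^2/4\rfloor$. For even $n$ this is exactly the bound $|D|\ge\frac1{12}n^2-\frac16 n$. For odd $n$, writing $m=\frac{n-1}2$, it only gives $3|D|\ge m^2$, which falls short of the claimed $\frac1{12}n^2-\frac16 n+\frac14$; here I would split into cases. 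If $3\nmid m$, or if $|E(M_D)|<\lfloor n^2/4\rfloor$, then divisibility of $3|D|$ by $3$ already pushes $|D|$ above the target. The one delicate case is $|E(M_D)|=\lfloor n^2/4\rfloor$: then the uniqueness part of Mantel's theorem forces $M_D\cong K_{m,m+1}$, so the covered pairs form a disjoint union $K_m\sqcup K_{m+1}$ of two cliques; consequently no member of $D$ can meet both sides of this partition (it would cover a forbidden cross pair), so $D$ restricts to an $(m,3,2)$-covering design on one part and an $(m+1,3,2)$-covering design on the other. Since one of $m,m+1$ is even and therefore admits no Steiner triple system, Theorem~\ref{teo_steiner} shows the corresponding covering design is strictly larger than the packing bound, so $|D|>\frac13\binom m2+\frac13\binom{m+1}2=\frac{m^2}{3}$, and integrality of $|D|$ finishes the bound.

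For the upper bounds I would use an explicit construction. Partition $[n]$ into parts $A$ and $B$ of sizes $\lceil n/2\rceil$ and $\lfloor n/2\rfloor$, choose a near-optimal $(\lceil n/2\rceil,3,2)$-covering design on $A$ and a near-optimal $(\lfloor n/2\rfloor,3,2)$-covering design on $B$, and let $D$ be their union. Then $M_D$ is the complete bipartite graph between $A$ and $B$, which is triangle-free, so $D$ dominates; and by Theorem~\ref{theo_covers} its size is at most $(1+o(1))\bigl(\frac13\binom{\lceil n/2\rceil}2+\frac13\binom{\lfloor n/2\rfloor}2\bigr)$, which equals $(1+o(1))\bigl(\frac1{12}n^2-\frac16 n\bigr)$ for $n$ even and is at most $(1+o(1))\bigl(\frac1{12}n^2-\frac16 n+\frac14\bigr)$ for $n$ odd. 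Finally, when $n\equiv2,6\pmod{12}$ we have $n/2\equiv1,3\pmod6$, so Theorem~\ref{teo_steiner} supplies an exact Steiner triple system on each half; the union of the two has size exactly $\frac23\binom{n/2}2=\frac1{12}n^2-\frac16 n$, which meets the lower bound and gives equality. I expect the main obstacle to be the odd-$n$ lower bound in the extremal case $M_D\cong K_{m,m+1}$, where the argument hinges on the fact that $m$ and $m+1$ cannot both support a Steiner triple system.
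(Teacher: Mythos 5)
Your proposal follows the same overall strategy as the paper's proof: recast domination in $F_3(K_n)$ as covering $2$-sets of $[n]$, observe that the graph of uncovered pairs must be triangle-free and apply Mantel's theorem for the lower bound, and for the upper bound split $[n]$ into two halves and put a near-optimal $(\cdot,3,2)$-covering design (a Steiner triple system when $n\equiv 2,6\pmod{12}$) on each half. The one place where you genuinely go beyond the paper is the odd-$n$ lower bound. The paper stops at $|D|\ge\frac13\left(\binom n2-\left\lfloor n^2/4\right\rfloor\right)=\frac{(n-1)^2}{12}=\frac{n^2}{12}-\frac n6+\frac1{12}$, which is weaker than the stated constant $+\frac14$ (the paper's displayed case analysis carries an arithmetic slip at that point). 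Your extra step --- using integrality of $|D|$ when $3\nmid\frac{n-1}{2}$ and, in the remaining extremal case, invoking the uniqueness part of Mantel's theorem to force the covered pairs into two cliques of sizes $m$ and $m+1$, so that $D$ splits into two covering designs one of which lives on an even ground set and hence by Theorem~\ref{teo_steiner} strictly exceeds the packing bound --- is correct and is exactly what is needed to justify the stated $+\frac14$. Everything else in your write-up (the domination--covering equivalence, the counting $3|D|\ge\binom n2-|E(M_D)|$, the bipartite construction, and the exact case $n\equiv 2,6\pmod{12}$) matches the paper's argument step for step.
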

\begin{proof}  

Let $D$ be a dominating set of $F_3(K_n)$.
Let $E$ be the $2$-sets of $[n]$ not covered by an element of $D$. Let $H$ be the graph with vertex set equal to $[n]$ and
edge set equal to $E$. Note that $H$ is triangle free since a triangle $\{i,j,k\}$ of $H$ would be a vertex of $F_3(K_n)$ not in $D$
nor adjacent to a vertex of $D$ (since all its $2$-sets are in $E$). By Mantel's theorem we have that 
\[E \le \left \lfloor \frac{n^2}{4} \right \rfloor.\]

Since every vertex of $D$ covers exactly three $2$-sets of $[n]$. We have that
\[\binom{n}{2} \le 3|D|+|E| \le  3|D|+\left \lfloor \frac{n^2}{4} \right \rfloor.\]
This implies that 
\[|D| \ge \frac{1}{3}\left (\binom{n}{2} - \left \lfloor \frac{n^2}{4} \right \rfloor \right ).\]
Note that
\[\left \lfloor \frac{n^2}{4} \right \rfloor=
\begin{cases}
\frac{n^2}{4}  & \textrm{ if } n \textrm{ is  even;}\\[0.6em]
\frac{n^2-1}{4}  & \textrm{ if } n \textrm{ is  odd;}
\end{cases}
\]
Therefore,
\[|D| \ge 
\begin{cases} 
\frac{n^2}{12}-\frac{n}{6}  & \textrm{ if } n \textrm{ is  even;}\\[0.6em]
\frac{n^2}{12}-\frac{n}{6} +\frac{1}{4} & \textrm{ if } n \textrm{ is  odd.}
\end{cases}\]

We now show the upper bound. Let \[V_1:=\{1,2,\dots,\lfloor n/2\rfloor\} \textrm{ and } V_2:=\{\lfloor n/2\rfloor+1, \lfloor n/2\rfloor+2,\dots, n\}.\]
By Theorem~\ref{theo_covers}, there exists subsets $D_1 \subset \binom{V_1}{3}$ and  $D_2 \subset \binom{V_2}{3}$ such that:
 every $2$-set of $V_1$ is covered by an element of $D_1$; every $2$-set of $V_2$ is covered by an element of $D_2$; and
  \[|D_1| \le (1+o(1))\frac{1}{3}\binom{\frac{\lfloor n \rfloor}{2}}{2} \textrm{ and } |D_2|\le (1+o(1))\frac{1}{3}\binom{\frac{\lceil n \rceil}{2}}{2}.\] 
By Theorem~\ref{teo_steiner}, if $n\equiv 2,6\pmod{12}$ in addition we can choose $D_1$ and $D_2$ so that 
 \[|D_1| = \frac{1}{3}\binom{\frac{\lfloor n \rfloor}{2}}{2} \textrm{ and } |D_2| = \frac{1}{3}\binom{\frac{\lceil n \rceil}{2}}{2}.\]
 
Let $A \in F_3(K_n)$. Since $\{V_1,V_2\}$ is a partition of $[n]$, we have that either
\[\left |A \cap V_1 \right | \ge 2 \textrm{ or } \left |A \cap V_2 \right | \ge 2.\]
Suppose  that $ \left |A \cap V_1 \right | \ge 2$. 
There exists is an element $B \in D_1$  that covers a $2$-set of $A \cap D_1$.
Thus, $A \in D_1$ or $A$ is adjacent to an element of $D_1$.
Similarly, if  $\left |A \cap V_2 \right | \ge 2$, we have that $A \in D_2$ or $A$ is adjacent to an element of $D_2$.
Therefore,
\[D:=D_1 \cup D_2\]
is a dominating set $F_2(K_n)$.
Simple arithmetic shows that 
\[\binom{\frac{\lfloor n \rfloor}{2}}{2}+\binom{\frac{\lceil n \rceil}{2}}{2} =
\begin{cases}
 \frac{n^2}{4}-\frac{n}{2} & \textrm{ if } n \textrm{ is  even; }\\[0.6em] \frac{n^2}{4}-\frac{n}{2}+\frac{1}{4}  & \textrm{ if } n \textrm{ is  odd. }
\end{cases}
\]
The result follows.
\end{proof}

\begin{theorem}
 \[\frac{1}{k^2}\binom{n}{k-1} <  \gamma(F_k(K_n)) \le \frac{1}{k}\binom{n}{k-1}.\]
\end{theorem}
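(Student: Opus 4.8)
The plan is to obtain both bounds by counting how many vertices of $F_k(K_n) \cong J(n,k)$ a single vertex can dominate, together with a covering-design construction for the upper bound. The degree of $J(n,k)$ is $k(n-k)$, since from a $k$-set $A$ we may remove any of the $k$ elements and insert any of the $n-k$ elements outside. Hence each vertex dominates exactly $1 + k(n-k)$ vertices, and by Proposition~\ref{prop_gen_lwb},
\[
\gamma(F_k(K_n)) \ge \frac{\binom{n}{k}}{1+k(n-k)}.
\]
To turn this into the stated bound $\tfrac{1}{k^2}\binom{n}{k-1}$, I would rewrite $\binom{n}{k} = \tfrac{n-k+1}{k}\binom{n}{k-1}$ and check that $\tfrac{n-k+1}{k(1+k(n-k))} > \tfrac{1}{k^2}$, i.e. that $k(n-k+1) > 1 + k(n-k)$, which simplifies to $k > 1$; the case $k=1$ is $F_1(K_n)=K_n$, where $\gamma = 1 = \tfrac{1}{1}\binom{n}{0}$ and the strict inequality should be handled separately (or the statement read with $k \ge 2$ in mind).

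For the upper bound, the idea is that a $k$-set $A$ is dominated by any $k$-set $B$ for which $|A\cap B| = k-1$, i.e. any $k$-set containing some $(k-1)$-subset of $A$. So if $\mathcal{F}$ is an $(n,k,k-1)$ covering design — a family of $k$-sets covering every $(k-1)$-set of $[n]$ — then every $k$-set $A$ either lies in $\mathcal{F}$ or contains a $(k-1)$-set covered by some $B \in \mathcal{F}$ with $B \ne A$, and in the latter case $|A \cap B| = |A \cap B| \ge k-1$, so in fact $A$ is adjacent to $B$ (if $B\ne A$ and both contain the same $(k-1)$-set, they intersect in exactly $k-1$ elements). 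Thus $\mathcal{F}$ is a dominating set, giving $\gamma(F_k(K_n)) \le C(n,k,k-1)$. The trivial covering bound stated in the preliminaries gives $C(n,k,k-1) \ge \binom{n}{k-1}/\binom{k}{k-1} = \tfrac{1}{k}\binom{n}{k-1}$, which is the wrong direction; I need instead an \emph{exact} family of this size. But a $(n,k,k-1)$ covering of size exactly $\tfrac{1}{k}\binom{n}{k-1}$ is precisely a partition of the $(k-1)$-sets into groups of $k$ each sharing a common $k$-set — equivalently, this is the statement that the complete $(k-1)$-uniform hypergraph on $[n]$ decomposes into "stars" $\binom{K}{k-1}$ over $k$-sets $K$. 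Such an exact covering exists whenever $k \mid \binom{n}{k-1}$ is not the only obstruction; more robustly, I would take the greedy/explicit family of all $k$-sets whose element-sum is $\equiv 0 \pmod k$, a set of size $\tfrac{1}{k}\binom{n}{k-1}\cdot(1+o(1))$ — but that only gives the bound asymptotically.

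The cleanest route, and the one I expect the authors take, is: the family of all $k$-subsets of $[n]$ that contain the element $1$, together with nothing else, is \emph{not} a covering, so that fails; instead consider for each $(k-1)$-subset $L$ of $[2,n]$ the $k$-set $L \cup \{1\}$ — this family has size $\binom{n-1}{k-1}$ and covers every $(k-1)$-set meeting... no. Let me restart the construction I actually believe works: take $\mathcal{F} = \{K \in \binom{[n]}{k} : 1 \in K\}$. Then $|\mathcal{F}| = \binom{n-1}{k-1}$, which is smaller than $\tfrac1k\binom nk=\tfrac{n-k+1}{k}\binom{n}{k-1}$ only when... this is getting complicated. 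I will instead just invoke that every $(k-1)$-set $L\subseteq[n]$ lies in some member of $\mathcal{F}$: if $1\in L$ then $L\cup\{x\}\in\mathcal F$ for any $x\notin L$; if $1\notin L$ then $L\cup\{1\}\in\mathcal F$. So $\mathcal{F}$ covers all $(k-1)$-sets, hence dominates, giving $\gamma \le \binom{n-1}{k-1} = \tfrac{n-k}{n}\binom{n}{k-1}<\tfrac{1}{k}\binom{n}{k-1}$ only for small $k$ — so this beats the claimed bound for $k\ge 2$ actually wait $\tfrac{n-k}{n}$ is close to $1$, larger than $\tfrac1k$. So that does not suffice either.

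Given these false starts, I record the genuine plan. \textbf{Lower bound:} apply Proposition~\ref{prop_gen_lwb} with $\Delta(J(n,k))=k(n-k)$ and simplify as above; the only subtlety is the strictness and the degenerate case $k=1$. \textbf{Upper bound:} the map $A\mapsto$ ``any $(k-1)$-subset of $A$'' shows that a $(n,k,k-1)$ covering design is a dominating set; one then needs a covering design of size at most $\tfrac1k\binom{n}{k-1}$. This is where the work lies, and it is the main obstacle: the trivial lower bound on $C(n,k,k-1)$ goes the wrong way, so one must exhibit an explicit exact (or sub-$\tfrac1k\binom{n}{k-1}$) family. The standard trick is a \emph{resolvable}-type construction: identify $[n]$ with $\Z_n$ (or $\Z_{n}$ with one point at infinity) and take $k$-sets that are unions of cosets / arithmetic-progression blocks, arranged so that each $(k-1)$-set is hit by exactly one block, which is possible under suitable divisibility — and for the general bound one truncates or pads. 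I expect the authors bypass the asymptotics entirely by a clean algebraic construction: for instance, when $k \mid n$, partition $[n]$ into $n/k$ blocks... but since the claimed bound $\tfrac1k\binom{n}{k-1}$ has no $o(1)$, the construction must be exact and uniform in $n$, which strongly suggests a direct combinatorial identity — most likely, a bijective argument showing the $k$-sets $\{K:\sum K\equiv 0\bmod k\}$ have size exactly $\tfrac1k\binom nk$ when $\gcd$-conditions hold, or an averaging argument over a group action on $\binom{[n]}{k}$ whose orbits have size $k$ and each contains a ``canonical'' dominator. The hard part, then, is producing a single covering family of $(k-1)$-sets by $k$-sets of size exactly $\tfrac1k\binom{n}{k-1}$ valid for all $n$; everything else is routine binomial-coefficient manipulation.
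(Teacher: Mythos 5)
Your lower bound is correct and is exactly the paper's argument: apply Proposition~\ref{prop_gen_lwb} with $\Delta(F_k(K_n))=k(n-k)$, rewrite $\binom{n}{k}=\frac{n-k+1}{k}\binom{n}{k-1}$, and observe that $k(n-k+1)>1+k(n-k)$ reduces to $k>1$. (Your caveat about $k=1$ is legitimate; the strict inequality does require $k\ge 2$.)

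The upper bound, however, is a genuine gap: you never exhibit a dominating set of the required size. All of your attempts go through $(n,k,k-1)$ covering designs, and, as you yourself observe, the trivial bound $C(n,k,k-1)\ge\frac{1}{k}\binom{n}{k-1}$ means a covering of size exactly $\frac{1}{k}\binom{n}{k-1}$ would have to be a perfect covering, i.e.\ a Steiner system $S(k-1,k,n)$, which exists only for special values of $n$. So the ``clean algebraic construction uniform in $n$'' you hope for cannot exist, and the covering route is not merely hard but doomed for general $n$. The paper avoids coverings entirely and the upper bound is one line: $\gamma(G)\le\alpha(G)$ (Proposition~\ref{prop_upbound_indep}, since every maximal independent set is dominating), and Theorem~\ref{teo_indep} gives $\alpha(F_k(K_n))\le\frac{1}{k}\binom{n}{k-1}\alpha(K_n)=\frac{1}{k}\binom{n}{k-1}$ because $\alpha(K_n)=1$. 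The dual object you needed was a \emph{packing} --- an independent set in $J(n,k)$ is a family of $k$-sets pairwise meeting in at most $k-2$ elements --- and it is the upper bound on the size of maximum packings, not the construction of a small covering, that delivers $\frac{1}{k}\binom{n}{k-1}$.
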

\begin{proof}
 Since $\alpha(K_n)=1$, the upper bound follows directly from Proposition~\ref{prop_upbound_indep} and Theorem~\ref{teo_indep}.
 Note that every vertex of $F_k(K_n)$ has degree equal to $k(n-k)$. By Propostion~\ref{prop_gen_lwb},
 we have that
 \begin{align*}
  \gamma(F_k(K_n))  & \ge \frac{1}{1+k(n-k)} \binom{n}{k}\\
   &=\frac{1}{1+k(n-k)} \cdot \frac{n-k+1}{k}\binom{n-1}{k}\\
   &=\frac{k(n-k+1)}{nk-k^2+1} \cdot \frac{1}{k^2}\binom{n-1}{k}\\
   &=\frac{nk-k^2+k}{nk-k^2+1} \cdot \frac{1}{k^2} \binom{n-1}{k}\\
   &>\frac{1}{k^2} \binom{n-1}{k}.\\
 \end{align*}
\end{proof}

\section*{Acknowledments}

This work was initiated during the \emph{Escuela de Verano 2025, Departamento de Matemáticas, CINVESTAV.} We thank
the participants for various helpful discussions.


\small \bibliographystyle{alpha} 
\bibliography{Domination}

 \end{document}